				\newtheorem{theorem}{Theorem}
                \newtheorem{proposition}[theorem]{Proposition}
\theoremstyle{definition} 
\theoremstyle{definition} 
\theoremstyle{definition} \newtheorem{remark}{Remark}
\theoremstyle{definition}
\begin{document}

\title{An Extension of the Cardioid Distributions on Circle}
\author{Erfan Salavati\\ \\
Department of Mathematics and Computer Science,\\
Amirkabir University of Technology (Tehran Polytechnic),\\
P.O. Box 15875-4413, Tehran, Iran.}
\date{}

\maketitle

\begin{abstract}
A new family of distributions on the circle is introduced which are a generalization of the Cardioid distributions. The elementary properties such as mean, variance and the characteristic function are computed.  The distribution is either unimodal or bimodal. The modes are computed. The symmetry of the distribution is characterized. The parameters are shown to be canonic (i.e. uniquely determined by the distribution).
We also show that this new family is a subset of distributions whose Fourier series has degree at most 2 and study the implications of this property.
\end{abstract}

Key Words: Circular Distributions, Cardioid Distribution, Von Mises Distribution

MSC: 62E10, 43A05, 43A25

\section{Introduction}
Circular data arise in many natural phenomenon. The main two categories of such data are physical directions and periodical time records. Wind direction and the direction of migrating birds are two examples of physical directions. The arrival times measured by clock and the date of specific observations (in a year) are two examples of periodical time records. Circular data are usually measured and represented by degrees or radians. Axial data are obtained from circular data by doubling them.

Circular distributions are an important tool in analysing and inference of circular data. One of the elementary and useful circular distributions is the Cardioid distribution. This distribution has two parameters $\mu\in [0,2\pi)$ and $\rho$ with $|\rho|<\frac{1}{2}$ and is denoted by $C(\mu,\rho)$ and has the density function
\[ f(\theta) = \frac{1}{2\pi} (1+2\rho \cos(\theta-\mu)). \]
Its distribution is symmetric around $\mu$ and is unimodal with a mode at $\mu$ and an anti-mode at $\mu+\pi$. This family of distributions is closed under convolution (summation of independent copies) and mixtures. For further properties see~\cite{Mardia}, page 45.

Another popular and useful circular distribution is Von Mises distribution. This distribution has two parameters $\mu\in [0,2\pi)$ and $\kappa>0$ and is denoted by $VM(\mu,\kappa)$ and has the probability density function
\[ g(\theta) = \frac{1}{2\pi I_0(\kappa)} e^{\kappa \cos(\theta-\mu)}. \]
where
\[ I_0(\kappa) = \frac{1}{2\pi} \int_0^{2\pi} e^{\kappa \cos(\theta)}d\theta. \]
The parameter $\mu$ is the mean direction and $\kappa$ is called the concentration parameter. This distribution is unimodal and symmetric about $\theta=\mu$. $\mu$ is the mode and $\mu+\pi$ is the anti-mode. For further information on Von Mises distributions see~\cite{Mardia}, page 36.

We should also mention a recently introduced extension of the von Mises distribution in~\cite{Kato_Jones}. Their family of distibutions is obtained by applying a M\"obius transformation on the von Mises distribution. This family also contains the wrapped Cauchy distribution.

In this article, we introduce a new family of circular distributions which generalize the cardioid distributions. We call this new distribution the quadratic cardioid distribution because it's density function has a second order triangular representation.

In the next section, after introducing the distribution, we will compute some elementary statistics and the characteristic function. We will also discuss the inverse problem of computing the parameters given the distribution. At last we discuss the mixtures and convolutions of such distributions.

\section{Mathematical Definition and Properties}
Let $\mu_1\le \mu_2 \in [0,2\pi)$ and $r_1,r_2\ge 0$. By a quadratic cardioid distribution, denoted by $QC(\mu_1,\mu_2,r_1,r_2)$, we mean a distribution with probability density function
\begin{multline*}
 f(\theta;\mu_1,\mu_2,r_1,r_2) =\\
 \frac{1}{I(r_1,r_2)} \Big(1+r_1^2+r_2^2+2r_1 \cos(\theta-\mu_1) + 2r_2 \cos(\theta-\mu_2)\\
  + 2r_1 r_2 \cos (2\theta-\mu_1-\mu_2)\Big)
 \end{multline*}
where $I(r_1,r_2)=2\pi (1+r_1^2+r_2^2)$.

Note that
\[ f(\theta;\mu_1,\mu_2,r_1,r_2)  = \frac{1}{I(r_1,r_2)} \left| 1+r_1 e^{i(\theta-\mu_1)} + r_2 e^{-i(\theta-\mu_2)} \right|^2 \]
and hence we have $f \ge 0$.

\subsection{Special Cases}
If $r_1=r_2=0$ this would be the uniform distribution on the circle and if $r_2=0$, this would be the ordinary cardioid distribution $C(\mu_1,r_1)$ and similarly for $r_1$.

Another interesting special case is provided in the following proposition.

\subsection{Elementary Statistics}
Expectation of the QC distribution is $\frac{2\pi}{I} (r_1 e^{i\mu_1} +r_2 e^{i\mu_2})$ and hence the mean direction is $\bar{\theta} = Arg (r_1 e^{i\mu_1} +r_2 e^{i\mu_2})$. Another easy calculation gives that the mean resultant length, $\bar{R}$ equals $\sqrt{r_1^2+r_2^2+2r_1r_2 \cos(\mu_1-\mu_2)}$.

For the median we should have,
\[ 0 = \int_\phi^{\phi+\pi} f(\theta) d\theta = \frac{1}{2} + 4 r_1 \sin(\phi - \mu_1) + 4 r_2 \sin(\phi - \mu_2) \]
Hence which is equivalent to $r_1 \sin(\phi - \mu_1) +  r_2 \sin(\phi - \mu_2)=0$. Solving this equation gives,
\[ \sin (\phi-\mu_1) = \pm \frac{r_2\sin(\mu_2-\mu_1)}{\bar{R}^2}\]
\[ \sin (\phi-\mu_2) = \pm \frac{r_1\sin(\mu_1-\mu_2)}{\bar{R}^2}\]
Above equations have two solutions which differ by $\pi$, the median is the one which $r_1 \cos(\phi - \mu_1) +  r_2 \cos(\phi - \mu_2)\le 0$

\subsection{Characteristic function}
A simple computation shows that the characteristic function of $QC(\mu_1,\mu_2,r_2,r_2)$ is
\[ \int_0^{2\pi} e^{in\theta} f(\theta) d\theta = \left\{ \begin{array}{ll}
1& n=0\\
\frac{2\pi}{I} (r_1 e^{i\mu_1} +r_2 e^{i\mu_2})& n=1\\
\frac{2\pi r_1 r_2}{I} e^{2i(\mu_1+\mu_2)} & n=2\\
0& n\ne 0,1,2
\end{array} \right. \]

\begin{remark}
	For small values of $r_1,r_2$, the QC distribution is an approximation of the generalised Von Mises distribution which is introduced and studied in~\cite{Yfantis}.
\end{remark}

\section{The Shape of the Distribution}
For different values of the parameters, the QC distribution can be symmetric or asymmetric, unimodal or bimodal.

\subsection{Conditions of Symmetry}
\begin{proposition}
The QC distribution is symmetric if and only if at least one of the following holds:
\begin{description}
	\item[(i)] $r_1=r_2$.
	\item[(ii)] $\mu_1=\mu_2$.
	\item[(iii)] at least one of $r_1$ and $r_2$ are zero.
\end{description}
In case (i), the distribution is symmetric about $\frac{\mu_1+\mu_2}{2}$,
in case (ii), the distribution is symmetric about $\mu_1=\mu_2$ and in case (iii), the distribution is symmetric about $mu_1$ (or $\mu_2$).
\end{proposition}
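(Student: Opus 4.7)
The plan is to prove sufficiency and necessity separately. For sufficiency, each of the three alternatives is verified by direct substitution into the density. In case (i), with $r_{1}=r_{2}=r$, the sum-to-product identity
\[ \cos(\theta-\mu_{1})+\cos(\theta-\mu_{2})=2\cos\!\Bigl(\theta-\tfrac{\mu_{1}+\mu_{2}}{2}\Bigr)\cos\!\Bigl(\tfrac{\mu_{2}-\mu_{1}}{2}\Bigr) \]
together with the observation that $\cos(2\theta-\mu_{1}-\mu_{2})=\cos\bigl(2(\theta-\tfrac{\mu_{1}+\mu_{2}}{2})\bigr)$ exhibits $f$ as an even function of $\theta-\tfrac{\mu_{1}+\mu_{2}}{2}$. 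In case (ii), every cosine appearing in $f$ becomes a function of $\theta-\mu_{1}$ alone, so evenness about $\mu_{1}$ is immediate. Case (iii) reduces to the ordinary cardioid $C(\mu_{1},r_{1})$ (or $C(\mu_{2},r_{2})$), whose symmetry about its centre is already known.

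The substantive direction is necessity, and the natural tool is the characteristic function computed in the previous subsection. A density on the circle is symmetric about some $\alpha$ if and only if $e^{-in\alpha}\hat{f}(n)\in\mathbb{R}$ for every $n\in\mathbb{Z}$. Because $\hat{f}$ is supported in $\{0,\pm 1,\pm 2\}$, only two nontrivial equations remain:
\[ (r_{1}e^{i\mu_{1}}+r_{2}e^{i\mu_{2}})\,e^{-i\alpha}\in\mathbb{R},\qquad r_{1}r_{2}\,e^{i(\mu_{1}+\mu_{2}-2\alpha)}\in\mathbb{R}. \]
The $n=2$ equation either holds automatically, which is precisely case (iii) with $r_{1}r_{2}=0$, or else pins $2\alpha\equiv \mu_{1}+\mu_{2}\pmod{\pi}$, leaving $\alpha=\tfrac{\mu_{1}+\mu_{2}}{2}$ (modulo a shift by $\pi/2$ that I would analyse separately) as the candidate axis. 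Substituting $\alpha=\tfrac{\mu_{1}+\mu_{2}}{2}$ into the $n=1$ equation and writing $\delta=\tfrac{\mu_{1}-\mu_{2}}{2}$, its imaginary part collapses to $(r_{1}-r_{2})\sin\delta$, whose vanishing is exactly the dichotomy $r_{1}=r_{2}$ (case (i)) or $\sin\delta=0$ (case (ii), after using the range $\mu_{1}\le\mu_{2}$ in $[0,2\pi)$ to exclude spurious values of $\delta$).

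The routine trigonometric manipulations are short; the real care lies in the case analysis. The main obstacle I expect is book\-keeping: handling the ambiguity produced by dividing angles by $2$, checking that the second residue class $\alpha=\tfrac{\mu_{1}+\mu_{2}}{2}+\tfrac{\pi}{2}$ does not contribute new symmetry axes under the sign and range conventions on the parameters, and then reading off, from the surviving solutions, the specific axes $\tfrac{\mu_{1}+\mu_{2}}{2}$, $\mu_{1}=\mu_{2}$, and $\mu_{1}$ (or $\mu_{2}$) announced in the statement.
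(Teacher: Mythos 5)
Your strategy for necessity---reducing symmetry about $\alpha$ to the reality of $e^{-in\alpha}\hat f(n)$ for $n=1,2$---is sound, and it is in fact cleaner than the paper's own argument, which isolates the second harmonic by examining $\lim_{n\to\infty}2^{-2n}f^{(2n)}(\theta)$ and then peels off the first harmonic; both routes come down to the same principle, namely that each harmonic must separately be symmetric about the same axis. Your sufficiency checks and your analysis of the branch $\alpha=\tfrac{\mu_1+\mu_2}{2}$ (imaginary part $(r_1-r_2)\sin\delta$, hence case (i) or (ii)) are correct.

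The genuine problem is the step you set aside as bookkeeping: the branch $\alpha=\tfrac{\mu_1+\mu_2}{2}+\tfrac{\pi}{2}$ cannot be excluded. On that branch the $n=2$ condition still holds (the coefficient becomes $-r_1r_2$, which is real), and the $n=1$ condition becomes the vanishing of the \emph{real} part, $(r_1+r_2)\cos\tfrac{\mu_1-\mu_2}{2}=0$, i.e.\ $|\mu_2-\mu_1|=\pi$ whenever $r_1r_2>0$. This is a nonempty case satisfying none of (i)--(iii): for instance $QC(0,\pi,1,2)$ has density proportional to $6-2\cos\theta-4\cos 2\theta$, an even function of $\theta$, hence symmetric about $\mu_1=0$, even though $r_1\ne r_2$, $\mu_1\ne\mu_2$ and $r_1r_2\ne 0$. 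So carrying out your plan honestly refutes the ``only if'' direction as stated rather than confirming it; a correct statement needs a fourth alternative $|\mu_2-\mu_1|=\pi$ (with symmetry about $\mu_1$). For what it is worth, the paper's proof has exactly the same lacuna: from the symmetry of $\cos(2\theta-\mu_1-\mu_2)$ about $\theta_0$ it retains only the solutions with $2\theta_0\equiv\mu_1+\mu_2\pmod{2\pi}$, whereas the condition is $2\theta_0\equiv\mu_1+\mu_2\pmod{\pi}$, and the discarded residue class is precisely the one producing the extra symmetric family. You identified the right place to worry; the resolution is just the opposite of the one you predicted.
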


\begin{proof}
	The if part is straightforward. For the only if part, assume the distribution is symmetric about $\theta_0$ (and hence about $\theta_0+\pi$). Denote the density function by $f(\theta)$ and assume that both of $r_1$ and $r_2$ are nonzero. We have,
\[ f^{(2n)}(\theta) = \frac{1}{I(r_1,r_2)}(-1)^n (2r_1 \cos(\theta-\mu_1) + 2r_2 \cos(\theta-\mu_2) + 2^{2n+1} r_1 r_2 \cos(2\theta-\mu_1-\mu_2) \]
Hence we have,
\[ \lim_{n\to\infty} f^{(2n)}(\theta) 2^{-2n} = r_1 r_2 \cos(2\theta-\mu_1-\mu_2) \]
since $f$ is symmetric about $\theta_0$ then so is $f^{(2n)}$ and hence $\cos(2\theta-\mu_1-\mu_2)$. This implies that $\theta_0=\mu_1+\mu_2$ (or $\mu_1+\mu_2+\pi$ which makes no difference in the remainder of the proof). And also we find that $2r_1 \cos(\theta-\mu_1) + 2r_2 \cos(\theta-\mu_2)$ is also symmetric about $\theta_0$. This, combined with $\mu_1=\mu_2$ implies that $r_1=r_2$.

\end{proof}

\subsection{Modes and Unimodality}
In general, modes and anti-modes are the roots of the following function,
\begin{equation}\label{eq_mode}
	f^\prime(\theta) = r_1 \sin (\theta-\mu_1)+ r_2 \sin(\theta-\mu_2) + 2 r_1 r_2 \sin(2\theta - \mu_1 - \mu_2)
\end{equation}
This equation can be solved analytically for general parameters. In fact, one can expand the expression in terms of $t=\tan(\frac{\theta}{2})$ and find a 4th degree equation which can be solved analytically using Ferrari's equation.

 In the special case $\mu_1=\mu_2=\mu$, the distribution is a cardioid distribution and is symmetric about $\mu$. In this case, two solutions of\eqref{eq_mode} are $\theta=\mu$ and $\theta=\mu+\pi$. The former is always a mode and the latter is a mode if and only if $4r_1r_2 > r_1+r_2$. If this inequality holds then there are two other solutions for\eqref{eq_mode} which are the solutions of $\cos(\theta-\mu_1)=-\frac{r_1+r_2}{4r_1 r_2}$ and both are anti-modes.

\subsection{Graphs of Density}
Figure~\ref{fig:QC_density} shows the different shapes of the density of a quadratic Cardioid distribution for different values of parameters.

Columns from left to right correspond respectively to $r_1=\frac{1}{2},1,\frac{3}{2}$ and $\mu_1=1,\frac{3}{2},2$ and rows from top to bottom correspond respectively $r_2=\frac{1}{2},1,\frac{3}{2}$ and $\mu_2=\frac{1}{2},1,\frac{3}{2}$.

\begin{figure}[h]
\label{fig:QC_density}
	\begin{center}
		\includegraphics[scale=0.28]{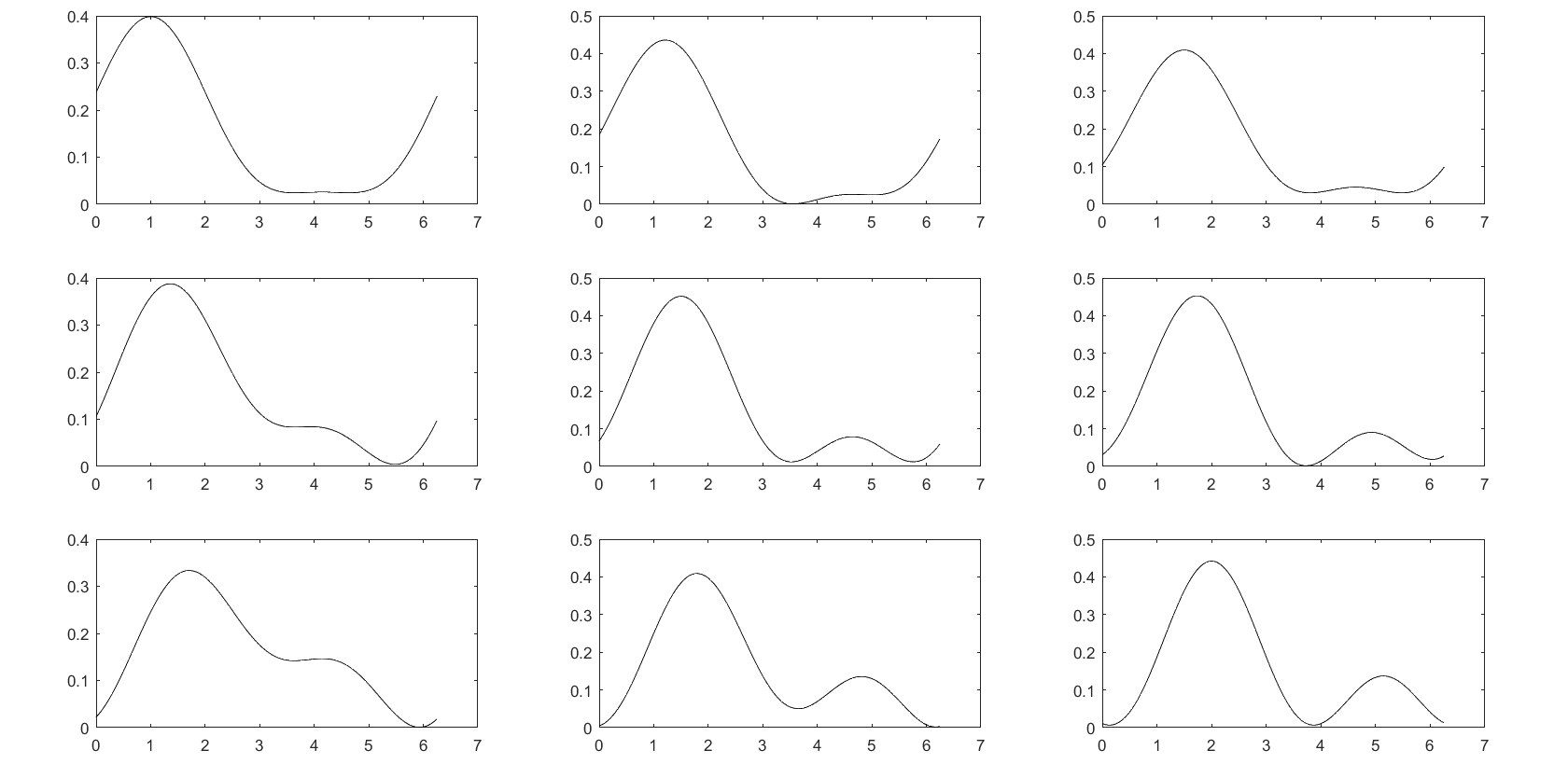}
	\end{center}
\end{figure}



\section{Uniqueness of Parameters}

\begin{theorem}
Parameters of a QC distribition are uniquely determined by the distribution.
\end{theorem}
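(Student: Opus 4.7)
The plan is to exploit the explicit characteristic function computed in the previous section. Setting $\alpha:=r_1 e^{i\mu_1}$ and $\beta:=r_2 e^{i\mu_2}$, the coefficients at frequencies $1$ and $2$ read
\[\hat f(1)=\frac{2\pi(\alpha+\beta)}{I},\qquad \hat f(2)=\frac{2\pi\,\alpha\beta}{I},\]
with $I=2\pi(1+|\alpha|^2+|\beta|^2)$. Since the distribution determines $\hat f(1)$ and $\hat f(2)$, the goal is to recover $\alpha$ and $\beta$ as complex numbers; the parameters $(r_j,\mu_j)$ then follow by taking modulus and argument, with the ordering $\mu_1\le\mu_2$ fixing the labelling.

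The first observation is that $\alpha$ and $\beta$ are the two roots of the quadratic $z^2-(\alpha+\beta)z+\alpha\beta=0$, so by Vieta's formulas they are pinned down as an unordered pair as soon as the complex numbers $\alpha+\beta$ and $\alpha\beta$ themselves are known (not merely up to the common unknown factor $1/I$). The crux is therefore to determine $I$, or equivalently the scalar $t:=1+|\alpha|^2+|\beta|^2\ge 1$.

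Writing $A:=\hat f(1)/(2\pi)$ and $B:=\hat f(2)/(2\pi)$, one has $\alpha+\beta=tA$ and $\alpha\beta=tB$. I would use the identity
\[2(|\alpha|^2+|\beta|^2)=|\alpha+\beta|^2+|(\alpha+\beta)^2-4\alpha\beta|\]
to reduce the problem to the single real equation
\[2(t-1)=t^2|A|^2+t\,|tA^2-4B|\]
in the unknown $t\ge 1$. The main obstacle will be to show that this equation admits a unique solution in $[1,\infty)$. The right-hand side combines a convex quadratic in $t$ with the modulus of a complex affine polynomial, and a naive concavity argument on $\phi(t):=2(t-1)-t^2|A|^2-t|tA^2-4B|$, combined with the boundary behaviour $\phi(1)\le 0$ and $\phi(t)\to-\infty$, only guarantees at most two roots; upgrading this to a single solution will require a finer argument exploiting the specific structure of $A,B$ coming from a genuine QC distribution. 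Once $t$ is pinned down, $\alpha+\beta$ and $\alpha\beta$ are determined, the quadratic is solved to recover $\{\alpha,\beta\}$, and the ordering convention yields the unique parameter tuple.
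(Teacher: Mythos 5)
Your reduction is correct as far as it goes: the QC density is a trigonometric polynomial of degree $2$, so the distribution is equivalent to the data $(A,B)=\bigl(\tfrac{\alpha+\beta}{t},\tfrac{\alpha\beta}{t}\bigr)$ with $t=1+|\alpha|^2+|\beta|^2$, and the parallelogram law $|\alpha+\beta|^2+|\alpha-\beta|^2=2(|\alpha|^2+|\beta|^2)$ does yield your scalar equation $2(t-1)=t^2|A|^2+t\,|tA^2-4B|$. The gap you flag --- uniqueness of the root $t\ge 1$ --- is, however, not closable: the equation genuinely has two admissible roots for generic QC data, and correspondingly the theorem as stated is false. Concretely, take $(\alpha,\beta)=(-\tfrac15,-\tfrac65)$, i.e.\ $(\mu_1,\mu_2,r_1,r_2)=(\pi,\pi,\tfrac15,\tfrac65)$, and $(\alpha',\beta')=(-\tfrac27,-\tfrac37)$, i.e.\ $(\pi,\pi,\tfrac27,\tfrac37)$. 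Both give $A=-\tfrac{35}{62}$ and $B=\tfrac{3}{31}$, with $t=\tfrac{62}{25}$ and $t'=\tfrac{62}{49}$ both solving your equation, and indeed both densities equal $\frac{1}{2\pi}\bigl(1-\tfrac{35}{31}\cos\theta+\tfrac{6}{31}\cos 2\theta\bigr)$.

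The structural reason is worth recording. Writing the density as $\frac{1}{I}\,\lvert\bar\alpha z^2+z+\beta\rvert^2$ with $z=e^{i\theta}$, the identity $|z-p|=|p|\,|z-1/\bar p|$ on $|z|=1$ shows that reflecting a root of this quadratic across the unit circle changes the modulus squared only by a multiplicative constant, which the normalization $I$ absorbs; so a QC distribution with $\alpha\beta\ne 0$ and no root on the circle generically admits several distinct parametrizations, one for each choice of which roots to reflect. This is a genuinely different route from the paper, which argues via the Schwarz lemma that the ratio $g/(ch)$ of the two associated quadratics is a product of M\"obius maps; but that argument has the same blind spot, since constant modulus on the circle only forces $g/(ch)$ to be a \emph{quotient} of Blaschke factors, and the reflected-root solutions are exactly the ones the step ``substituting implies the parameters are equal'' discards without justification. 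Your approach, pushed to completion, salvages the correct statement: the parameters are determined up to root reflection, and become unique under an extra normalization such as requiring both roots of $\bar\alpha z^2+z+\beta$ to lie outside the closed unit disc (equivalently, selecting a distinguished root $t$ of your scalar equation).
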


\begin{proof}
	Assume that $QC(\mu_1,\mu_2,r_2,r_2)\approx QC(\mu_1^\prime,\mu_2\prime,r_1^\prime,r_2^\prime)$.
	Define
	\[ g(z) = r_1 e^{-i\mu_1} z^2 + z + r_2 e^{i\mu_2} \]
	\[ h(z) = r_1^\prime e^{-i\mu_1^\prime} z^2 + z + r_2^\prime e^{i\mu_2^\prime} \]
It follows from the assumption that for $z=e^{i\theta}$, $\left|\frac{g(z)}{h(z)}\right|$ is a constant $c$. Hence the function $\frac{g(z)}{ch(z)}$ maps the unit circle $|z|=1$ into itself.

Now it follows from the Schwartz lemma that $\frac{g(z)}{ch(z)}$ is a product of two Mobius functions $e^{i\alpha}\frac{z-a}{1-\bar{a}z}$ and $e^{i\beta}\frac{z-b}{1-\bar{b}z}$. Substituting implies that $(\mu_1,\mu_2,r_1,r_2) = (\mu_1^\prime,\mu_2\prime,r_1^\prime,r_2^\prime)$
\end{proof}

\section{QC as a parametrization of second order positive definite Fourier series}

Let $\mathcal{M}$, $\mathcal{M}^+$ and $\mathcal{M}^{\pi}$ be respectively the set of all signed Borel measures, positive Borel measures and probability Borel measures on $\mathbb{S}^1$.

We define the spaces $\mathcal{T}_N$ to be the set of all positive signed Borel measures on $\mathbb{S}^1$ whose Fourier series has degree at most $N$. In other words,
\[ \mathcal{T}_N = \{ \mu \in \mathcal{M}: \hat{\mu}(n) = 0\quad \forall n\notin \{-N,\ldots,0,\ldots,N\} \} \]
and define
\[ \mathcal{T}^+_N = \mathcal{T}_N \cap \mathcal{M}^+,\quad \mathcal{T}^\pi_N = \mathcal{T}_N \cap \mathcal{M}^\pi\]

\begin{proposition}
$\mathcal{T}^+_N$ is closed under linear combination and convolution. $\mathcal{T}^\pi_N$ is closed under mixture and convolution.
\end{proposition}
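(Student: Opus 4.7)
The plan is to reduce everything to two classical identities for the Fourier coefficients of a measure on $\mathbb{S}^1$: linearity, $\widehat{a\mu+b\nu}(n)=a\hat\mu(n)+b\hat\nu(n)$, and the convolution theorem, $\widehat{\mu\ast\nu}(n)=\hat\mu(n)\hat\nu(n)$. Each of the four closure properties will then follow by combining one of these identities with an elementary observation about how positivity (or being a probability measure) behaves under the corresponding operation.

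First I would dispose of $\mathcal{T}_N^+$. For linear combinations I would flag at the outset that the statement must be read as \emph{non-negative} linear combinations, since arbitrary real coefficients can destroy positivity. Given $\mu,\nu\in\mathcal{T}_N^+$ and $a,b\ge 0$, positivity of $a\mu+b\nu$ is immediate, and linearity of the Fourier transform yields $\widehat{a\mu+b\nu}(n)=a\hat\mu(n)+b\hat\nu(n)=0$ for $|n|>N$. For convolution, positivity of $\mu\ast\nu$ follows by writing $(\mu\ast\nu)(A)=\iint \mathbf{1}_A(s+t)\,d\mu(s)\,d\nu(t)$ with $\mu,\nu\ge 0$, and the convolution theorem gives $\widehat{\mu\ast\nu}(n)=\hat\mu(n)\hat\nu(n)$, which vanishes for $|n|>N$ as soon as \emph{either} factor does.

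For $\mathcal{T}_N^\pi$ the same two identities do the work. A mixture $\sum_i\lambda_i\mu_i$ with $\lambda_i\ge 0$ and $\sum_i\lambda_i=1$ is again a probability measure on $\mathbb{S}^1$, and by linearity its Fourier coefficients vanish for $|n|>N$. For the convolution of two elements of $\mathcal{T}_N^\pi$, the normalization $\hat\mu(0)=\hat\nu(0)=1$ together with the convolution theorem forces $\widehat{\mu\ast\nu}(0)=1$, so that $\mu\ast\nu$ is a probability measure, and the degree bound is preserved exactly as in the previous paragraph.

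There is no real obstacle here; the whole proposition is bookkeeping once the two Fourier identities are invoked. The only point that needs a line of commentary is the interpretation of "linear combination" in the first claim, which I would make explicit as "non-negative linear combination" to keep the assertion correct.
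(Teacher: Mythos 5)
Your proof is correct and follows essentially the same route as the paper: linearity of the Fourier transform for (linear) combinations and mixtures, the convolution theorem for convolutions, plus the elementary stability of positivity and of total mass one. Your explicit caveat that ``linear combination'' must mean \emph{non-negative} linear combination is a worthwhile precision that the paper's own one-line proof glosses over, but it does not change the argument.
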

\begin{proof}
Since Fourier transform is a linear transform, hence closedness under linear combinations is trivial. On the other hand, convolution of measures translates in to multiplication of Fourier transforms and hence closedness under convolution also follows. The statement for $\mathcal{T}^\pi_N$ follows easily from the fact that $\mathcal{M}^\pi$ is closed under mixture and convolution.
\end{proof}

Let $\mathcal{C}$ and $\mathcal{QC}$ be respectively the space of all Cardioid and Quasi-Cardioid distributions.

A simple observation shows the relation between Cardioid distributions and $\mathcal{T}^\pi_N$ spaces.

\begin{proposition}
We have $\mathcal{C}=\mathcal{T}^\pi_1$
\end{proposition}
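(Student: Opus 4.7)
The plan is to establish the equality via the two inclusions separately.

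For $\mathcal{C}\subseteq\mathcal{T}^\pi_1$, I would take a generic Cardioid density $f(\theta)=\frac{1}{2\pi}(1+2\rho\cos(\theta-\mu))$ and rewrite $2\rho\cos(\theta-\mu)=\rho e^{-i\mu}e^{i\theta}+\rho e^{i\mu}e^{-i\theta}$. Reading off Fourier coefficients directly then shows $\hat{f}(0)=1$, $\hat{f}(\pm 1)=\rho e^{\mp i\mu}$, and $\hat{f}(n)=0$ for $|n|\ge 2$, so $f\in\mathcal{T}^\pi_1$.

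For the reverse inclusion $\mathcal{T}^\pi_1\subseteq\mathcal{C}$, let $\nu\in\mathcal{T}^\pi_1$. Since all Fourier coefficients outside $\{-1,0,1\}$ vanish, the trigonometric polynomial $p(\theta):=\frac{1}{2\pi}\sum_{n=-1}^{1}\hat{\nu}(n)e^{in\theta}$ has the same Fourier coefficients as $\nu$, so by the uniqueness of Fourier series on Borel measures on $\mathbb{S}^1$, $\nu$ is absolutely continuous with density $p$. I would then parametrize: the probability condition gives $\hat{\nu}(0)=1$; the reality of $p$ forces $\hat{\nu}(-1)=\overline{\hat{\nu}(1)}$; writing $\hat{\nu}(1)=\rho e^{-i\mu}$ with $\rho\ge 0$ and $\mu\in[0,2\pi)$ puts the density into the Cardioid form $\frac{1}{2\pi}(1+2\rho\cos(\theta-\mu))$. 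Finally, the nonnegativity of $p$ forces $2\rho\le 1$, so $\nu\in\mathcal{C}$.

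The main obstacle, and really the only nontrivial step, is the appeal to uniqueness of Fourier series for measures, which promotes the a priori set-theoretic object $\nu$ into an $L^1$ density expressible as a trigonometric polynomial. Everything else is a matter of parametric bookkeeping. A minor cosmetic point worth acknowledging: the definition of the Cardioid distribution in the introduction imposes the strict bound $|\rho|<\tfrac{1}{2}$, whereas the nonnegativity of $p$ only delivers $\rho\le\tfrac{1}{2}$ (the boundary yielding a density that vanishes at $\mu+\pi$). The identity $\mathcal{C}=\mathcal{T}^\pi_1$ therefore requires reading both sides with matching conventions on this endpoint.
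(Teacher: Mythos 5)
Your proof is correct, and since the paper offers no proof of this proposition at all (it is introduced only as ``a simple observation''), your two-inclusion argument supplies exactly the intended reasoning: the forward inclusion by reading off Fourier coefficients, and the reverse by invoking uniqueness of Fourier coefficients for finite Borel measures to identify $\nu$ with the degree-one trigonometric polynomial, then using positivity to bound $\rho$. Your remark about the endpoint is well taken and is in fact a genuine (if minor) defect in the statement as written: positivity of the density only yields $\rho\le\tfrac{1}{2}$, so the measure with $\rho=\tfrac{1}{2}$ lies in $\mathcal{T}^\pi_1$ but is excluded from $\mathcal{C}$ by the paper's strict convention $|\rho|<\tfrac{1}{2}$; the equality holds only after closing that endpoint on one side or the other.
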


One of the interesting properties of QC distributions is stated in the following proposition.

\begin{proposition}
We have $\mathcal{QC} \subset \mathcal{T}^\pi_2$ and that $\mathcal{QC}$.
\end{proposition}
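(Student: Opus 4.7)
The plan is to verify directly the two defining conditions of $\mathcal{T}^\pi_2$: any $f\in\mathcal{QC}$ must be a probability density on $\mathbb{S}^1$, and its Fourier coefficients $\hat f(n)$ must vanish for $|n|>2$. Both pieces are, in essence, already assembled earlier in the paper; the work is just to put them together.

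First I would dispose of $f\in\mathcal{M}^\pi$. Nonnegativity is immediate from the identity
\[ f(\theta;\mu_1,\mu_2,r_1,r_2)=\frac{1}{I(r_1,r_2)}\bigl|1+r_1 e^{i(\theta-\mu_1)}+r_2 e^{-i(\theta-\mu_2)}\bigr|^2 \]
recorded in the definition. To see that $\int_0^{2\pi}f\,d\theta=1$, I would expand the squared modulus as a finite sum of exponentials $e^{ik\theta}$, $k\in\{-2,-1,0,1,2\}$, and apply orthogonality of $\{e^{ik\theta}\}_{k\in\mathbb{Z}}$ on $[0,2\pi)$: only the $k=0$ terms survive, and these sum to $1+r_1^2+r_2^2$, whose integral is exactly $I(r_1,r_2)=2\pi(1+r_1^2+r_2^2)$.

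Second, the Fourier spectrum. This is precisely the content of the Characteristic Function subsection: the explicit calculation gives $\hat f(n)=0$ for $n\notin\{0,1,2\}$, and since $f$ is real-valued we have $\hat f(-n)=\overline{\hat f(n)}$, so nonvanishing coefficients are confined to $|n|\le 2$. Combining the two points yields $f\in\mathcal{T}^\pi_2$, which is the claimed inclusion.

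There is no real obstacle to this argument; it is a bookkeeping verification from material already collected. The only care required is in matching the sign convention between the Fourier transform defining $\mathcal{T}_N$ and the $e^{in\theta}$ used for the characteristic function, but since the support set $\{-N,\ldots,N\}$ is symmetric this choice is immaterial. If one wished to pursue the (apparently truncated) converse direction toward $\mathcal{QC}=\mathcal{T}^\pi_2$, the substantive step would be an application of the Fej\'er--Riesz factorization to represent an arbitrary nonnegative trigonometric polynomial of degree $2$ as $|p(e^{i\theta})|^2$ for a quadratic $p$, and then to use the freedom of multiplication by a unimodular constant to normalize $p$ into the QC form; but that lies beyond the inclusion stated here.
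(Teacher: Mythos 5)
Your argument is correct and is exactly the one the paper implicitly relies on (the paper states this proposition without proof): the squared-modulus identity plus the normalization $I(r_1,r_2)=2\pi(1+r_1^2+r_2^2)$ gives $f\in\mathcal{M}^\pi$, and the characteristic-function computation already recorded in the paper shows $\hat f(n)=0$ for $|n|>2$, so $f\in\mathcal{T}^\pi_2$. Your closing observation that a Fej\'er--Riesz factorization is the natural route to the converse inclusion $\mathcal{T}^\pi_2\subset\mathcal{QC}$ correctly identifies the substantive question the paper leaves open in the remark following this proposition.
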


\begin{remark}
    Indeed it may be the case that $\mathcal{QC} = \mathcal{T}^\pi_2$ but we have not yet succeeded in proving or disproving it.
\end{remark}

The previous proposition implies that the QC distributions are actually parametrizing the space $\mathcal{T}^\pi_2$ at least partially. A good question is that to what extent is this parametrization complete? In other words, what portion of $\mathcal{T}^\pi_2$ is covered by this parametrization?

For this reason we need to characterize the elements of $\mathcal{T}^\pi_2$ in terms of their Fourier series. We use a well-known theorem due to Bochner which provides a necessary and sufficient condition for a function to be the characteristic function of a probability distribution. The Bochner theorem holds in general for all probability measures on dual group of Abelian groups, but we state here only the special case of probability measures on $\mathbb{S}^1$.

\begin{theorem}[Bochner]
A function $\hat{f}:\mathbb{Z}\to \mathbb{C}$ is the Fourier series of a positive Borel measure on $\mathbb{S}^1$ if and only if $\hat{f}(0)=2\pi$ and the following matrix is positive definite:
\[\left[ {\begin{array}{*{20}{c}}
  {\hat f(0)}&{\hat f(1)}&{\hat f(2)}& \cdots & \cdots  \\ 
  {\hat f( - 1)}&{\hat f(0)}&{\hat f(1)}&{\hat f(2)}& \cdots  \\ 
  {\hat f( - 2)}&{\hat f( - 1)}&{\hat f(0)}&{\hat f(1)}& \ddots  \\ 
   \vdots &{\hat f( - 2)}&{\hat f( - 1)}& \ddots & \ddots  \\ 
   \vdots & \vdots & \ddots & \ddots & \ddots  
\end{array}} \right]\]
\end{theorem}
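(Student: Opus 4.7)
The plan is to read off the inclusion directly from the characteristic function calculation already recorded in Section~2.3. By definition, $\mathcal{T}^\pi_2$ consists of probability Borel measures on $\mathbb{S}^1$ whose Fourier coefficients $\hat\mu(n)$ vanish for $|n|>2$, so what I need to verify for an arbitrary member of $\mathcal{QC}$ is exactly that property.

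First I would fix $f = f(\cdot;\mu_1,\mu_2,r_1,r_2)$ and recall that the characteristic function has already been computed: $\hat f(0)=1$, $\hat f(1)=\frac{2\pi}{I}(r_1 e^{i\mu_1}+r_2 e^{i\mu_2})$, $\hat f(2)=\frac{2\pi r_1 r_2}{I}e^{i(\mu_1+\mu_2)}$, and $\hat f(n)=0$ for $n\notin\{0,1,2\}$ (among nonnegative indices). For negative indices I would invoke the fact that $f$ is real-valued, which forces $\hat f(-n)=\overline{\hat f(n)}$; in particular $\hat f(n)=0$ for all $|n|>2$. This places $f$ in $\mathcal{T}_2$.

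Next, since $f$ is the density of a probability measure, the associated measure $f(\theta)\,d\theta$ lies in $\mathcal{M}^\pi$ (the representation $f=\frac{1}{I}|1+r_1 e^{i(\theta-\mu_1)}+r_2 e^{-i(\theta-\mu_2)}|^2$ already noted in Section~2 makes positivity manifest, and the normalization $\hat f(0)=1$ gives total mass~$1$). Combining the two observations yields $f\in\mathcal{T}_2\cap\mathcal{M}^\pi=\mathcal{T}^\pi_2$, which is the desired inclusion.

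There is essentially no obstacle: the statement is an unpacking of the characteristic function formula together with the definition of $\mathcal{T}^\pi_2$. The only minor care-points are (a) extending the vanishing of $\hat f(n)$ from $n\geq 3$ to $|n|\geq 3$ via conjugate symmetry, and (b) pointing out that the second clause of the proposition as printed (``and that $\mathcal{QC}$'') appears truncated in the manuscript; the substantive content that can be proved without additional work is the inclusion $\mathcal{QC}\subset\mathcal{T}^\pi_2$, while the converse is flagged as open in the subsequent remark.
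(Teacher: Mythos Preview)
Your write-up is not a proof of the stated theorem. The statement you were asked to address is Bochner's theorem: a sequence $\hat f:\mathbb{Z}\to\mathbb{C}$ arises as the Fourier coefficients of a positive Borel measure on $\mathbb{S}^1$ if and only if the associated infinite Toeplitz matrix is positive definite (together with the normalization $\hat f(0)=2\pi$). What you have written instead is a proof of the separate proposition $\mathcal{QC}\subset\mathcal{T}^\pi_2$, which appears a few lines earlier in the manuscript. Your argument for that inclusion is fine, but it has no bearing on Bochner's theorem: nowhere do you address positive definiteness of the Toeplitz matrix, nor the converse direction (that any positive-definite sequence is realized by some positive measure), which is the nontrivial half.

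In the paper itself, Bochner's theorem is quoted as a classical result and is not proved; it is invoked as an external tool to derive the subsequent characterization of $\mathcal{T}^\pi_2$. So there is no ``paper's own proof'' to compare with. If you intend to supply a proof, you would need the standard Herglotz/Bochner argument: for the forward direction, show that $\sum_{j,k} a_j\overline{a_k}\,\hat f(j-k)=\int_{\mathbb{S}^1}\bigl|\sum_j a_j e^{ij\theta}\bigr|^2\,d\mu(\theta)\ge 0$; for the converse, use the positive-definiteness to build the Fej\'er-type averages $\sigma_N(\theta)=\sum_{|n|\le N}\bigl(1-\tfrac{|n|}{N+1}\bigr)\hat f(n)e^{-in\theta}\ge 0$ and pass to a weak-$*$ limit. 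None of that is present in your proposal.
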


Let $\hat{\nu}$ be the Fourier series of an element $\nu$ in $\mathcal{T}^\pi_2$. We assume $\hat{\nu}(1)=2\pi c_1$ and $\hat{\nu}(2)=2\pi c_2$. It follows that $\hat{\nu}(-1)=2\pi \bar{c_1}$ and $\hat{\nu}(-2)=2\pi \bar{c_2}$.

Applying the Bochner theorem now implies,
\begin{proposition}
$\nu \in \mathcal{T}^\pi_2$ if and only if
$\hat{\nu}(0)=2\pi$, $\hat{\nu}(1)=2\pi c_1$ and $\hat{\nu}(2)=2\pi c_2$ and the following five diagonal matrix is positive definite:
\[\left[ {\begin{array}{*{20}{c}}
  1&{{c_1}}&{{c_2}}&0& \cdots  \\ 
  {{{\bar c}_1}}&1&{{c_1}}&{{c_2}}& \cdots  \\ 
  {{{\bar c}_2}}&{{{\bar c}_1}}&1&{{c_1}}& \ddots  \\ 
  0&{{{\bar c}_2}}&{{{\bar c}_1}}& \ddots & \ddots  \\ 
   \vdots & \vdots & \ddots & \ddots & \ddots  
\end{array}} \right]\]

Applying the determinant criterion for positive definiteness, gives us several necessary conditions:
\[- c_{1} \overline{c_{1}} + 1 >0 \]
\[ c_{1}^{2} \overline{c_{2}} - 2 c_{1} \overline{c_{1}} + c_{2} \overline{c_{1}}^{2} - c_{2} \overline{c_{2}} + 1>0 \]
\[ c_{1}^{2} \overline{c_{1}}^{2} + 2 c_{1}^{2} \overline{c_{2}} - 2 c_{1} c_{2} \overline{c_{1}} \overline{c_{2}} - 3 c_{1} \overline{c_{1}} + c_{2}^{2} \overline{c_{2}}^{2} + 2 c_{2} \overline{c_{1}}^{2} - 2 c_{2} \overline{c_{2}} + 1 >0\]
\end{proposition}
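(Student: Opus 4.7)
The plan is to deduce the proposition directly from the Bochner theorem stated just above, exploiting the defining feature of $\mathcal{T}_2$. Membership of $\nu$ in $\mathcal{T}^\pi_2$ means $\nu$ is a probability measure whose Fourier coefficients vanish outside $\{-2,-1,0,1,2\}$. Substituting $\hat\nu(n)=0$ for $|n|\ge 3$ into the Bochner matrix annihilates every entry off the central five diagonals, producing exactly the banded Toeplitz matrix displayed in the proposition (after factoring the common $2\pi$ from every entry, which preserves positive-definiteness). The iff is then an immediate restriction of Bochner to sequences supported on $\{-2,\dots,2\}$, with the normalization $\hat\nu(0)=2\pi$ encoding the probability constraint in the paper's convention.

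For the three inequalities, I would invoke Sylvester's criterion: positive-definiteness of the infinite Hermitian matrix forces every leading principal minor to be positive. Computing the $2\times 2$ minor gives $1-c_1\bar c_1>0$ directly; expanding the $3\times 3$ minor by cofactors yields the second inequality; and the $4\times 4$ minor, in which the zeros from the five-diagonal structure already appear and simplify the expansion, yields the third. These are routine algebraic computations that I would carry out by direct expansion, with no conceptual content beyond bookkeeping.

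The one genuinely delicate point to flag is that positive-definiteness of the infinite matrix corresponds to infinitely many Sylvester inequalities, of which the three printed are merely the first. They are therefore only necessary, not sufficient, for membership in $\mathcal{T}^\pi_2$; the iff in the proposition still holds because it keeps the full infinite positive-definiteness requirement. Reducing this to a finite sufficient criterion --- for instance via a Schur-complement argument on the Toeplitz structure, or equivalently through a Fej\'er--Riesz factorization of a nonnegative trigonometric polynomial of degree $2$ --- is the real mathematical content beyond Bochner, and would be the natural obstacle one would need to overcome to decide the open question of whether $\mathcal{QC}=\mathcal{T}^\pi_2$ raised in the preceding remark.
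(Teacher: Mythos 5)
Your proposal is correct and follows exactly the route the paper intends: the paper offers no written proof beyond the phrase ``Applying the Bochner theorem now implies,'' and the intended argument is precisely yours --- vanishing of $\hat\nu(n)$ for $|n|\ge 3$ turns the Bochner matrix into the displayed five-diagonal Toeplitz matrix, and the three inequalities are the $2\times2$, $3\times3$ and $4\times4$ leading principal minors. Your closing caveat that these minors are only necessary conditions is also consistent with the paper, which labels them as such.
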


\end{document}